\documentclass[11pt,oneside]{amsart}
\usepackage{amssymb,latexsym}
\usepackage{amsmath}
\usepackage{amsfonts}
\usepackage{multicol}
\usepackage[height=188mm,width=129mm]{geometry}
\theoremstyle{plain}
\newtheorem{proposition}{Proposition}
\newtheorem*{corollary}{Corollary}

\theoremstyle{remark}

%--------------------------------------------------------
\begin{document}
\title[On the polyhedral cones of convex and concave vectors]{On the polyhedral cones\\ of  convex and concave vectors}
\author{Stephan Foldes}
\address{Stephan Foldes \newline%
\indent Institute of Mathematics,   \newline%
\indent Tampere University of Technology,  \newline%
\indent PL 553, 33101 Tampere, Finland }
\email {sf@tut.fi}%

\author{L\'aszl\'o Major}
\address{L\'aszl\'o Major\newline%
\indent Institute of Mathematics,   \newline%
\indent Tampere University of Technology,  \newline%
\indent PL 553, 33101 Tampere, Finland }\email{laszlo.major@tut.fi}%
\hspace{-4mm} \date{Nov 18, 2013}
\subjclass{Primary 05D05, 05A20,   52A40; Secondary 15B48, 15A39} %
\keywords{concave sequence, convex sequence, log-concavity, unimodality, convex cone, orthant,  centrally symmetric matrix}

\begin{abstract}Convex or concave sequences of $n$ positive terms, viewed as vectors in $n$-space, constitute convex cones with $2n-2$ and $n$ extreme rays, respectively. Explicit description is given of vectors spanning these extreme rays, as well as of non-singular linear transformations between the positive orthant and the simplicial cones formed by the positive concave vectors. The simplicial cones of monotone convex and concave vectors can be described similarly.
\end{abstract}
\maketitle
In this note a sequence (vector) $\mathbf{a}=(a_1,\ldots, a_n)$, $n\geq 1$, of
 real numbers is called \textit{positive} if for all $1\leq i \leq n$, $0\leq a_i$, \textit{increasing} if for all $1\leq i < n$, $0\leq a_{i+1}-a_i$ and \textit{convex} if for all $1< i < n$, $0\leq a_{i+1}-2a_i+a_{i-1}$. The sequence $\textbf{a}$ is \textit{negative}, \textit{decreasing}, or \textit{concave}, when $-\textbf{a}$ is positive, increasing, or convex, respectively. The vector $\textbf{a}$ is \textit{unimodal} if it is the concatenation of an increasing and a decreasing sequence.  All increasing, decreasing and concave vectors are unimodal. The question of unimodality of the members of a class of sequences naturally arising in combinatorics can be difficult (e.g. Whitney numbers \cite{D1, R1} and face vectors of certain classes of polytopes \cite{SZ1,M1}). Proof of unimodality of $\textbf{a}=a_1,\ldots,a_n$ (where all $a_i>0$) is sometimes based on proving the stronger property of concavity of $\log a_1,\ldots,\log a_n$ (log-concavity) \cite{B1,D1,S1,M1}. In turn, to prove that log-concavity is preserved in certain constructions of sequences, ordinary concavity of some coefficient sequences may be used \cite{K1}. Each of the sets of positive, negative, increasing, decreasing, convex and concave vectors, and various intersections of these sets, form closed \textit{cones} (sets containing the null vector and closed under linear combinations with non-negative coefficients, called \textit{conic combinations}). The cone of positive vectors  is the positive orthant in $\mathbb{R}^n$. For positive increasing vectors this cone was described by Lov\'asz (\cite{L1}, p. 248, last equation) and by Marichal and Mathonet \cite{MM1} in terms of its intersection with the unit hypercube (one of the $n!$ simplices of the standard triangulation of the hypercube). In this note we describe the cone of positive concave and positive convex vectors by determining the extreme rays of the cones that they form. When this cone is simplicial we describe a standardized matrix realizing the transformation of the orthant to the cone in question.

Within $\mathbb{R}^n$, let $C$ be the set of positive concave vectors having maximal component value $1$.  In $C$ there are exactly $n$ minimal vectors with respect to the componentwise order, called \textit{minimal standard concave vectors}.  We shall denote these $\textbf{c}^{(1)},\ldots,\textbf{c}^{(n)}:$ $\textbf{c}^{(i)}$ being the minimal concave vector whose  $i^{th}$ component is maximal. For $1<i<n$, ${c}_1^{(i)}=0, {c}_2^{(i)},\ldots, {c}_i^{(i)}=1$ is an increasing arithmetic progression and ${c}_i^{(i)}=1, {c}_{i+1}^{(i)},\ldots, {c}_n^{(i)}=0$ is a decreasing arithmetic progression, while $\textbf{c}^{(1)}$ and $\textbf{c}^{(n)}$ are decreasing, respectively increasing arithmetic progressions from $0$ to $1$ or $1$ to $0$.
\begin{proposition}The $n$ minimal standard concave vectors $\textbf{c}^{(1)},\ldots,\textbf{c}^{(n)}$ form a basis of $\mathbb{R}^n$. Every positive concave vector $\textbf{c}$ can be written uniquely as 
$$\textbf{c}=\lambda_1\textbf{c}^{(1)}+\cdots +\lambda_n\textbf{c}^{(n)}$$ 
with $\lambda_1\ldots, \lambda_n\geq 0.$
\end{proposition}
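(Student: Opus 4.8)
The plan is to read off the coefficients explicitly using the second-difference operator $\Delta^2\colon\mathbb{R}^n\to\mathbb{R}^{n-2}$ given by $(\Delta^2\mathbf{a})_i=a_{i-1}-2a_i+a_{i+1}$ for $1<i<n$, supplemented by the two endpoint coordinates. First I would record the closed forms $c^{(i)}_j=(j-1)/(i-1)$ for $j\le i$ and $c^{(i)}_j=(n-j)/(n-i)$ for $j\ge i$ (with $\mathbf{c}^{(1)}$, $\mathbf{c}^{(n)}$ read as the single relevant branch), and compute $\Delta^2\mathbf{c}^{(i)}$ from them. The key observation is that for $1<i<n$ the vector $\Delta^2\mathbf{c}^{(i)}$ vanishes at every interior index other than $i$ — since away from its peak $\mathbf{c}^{(i)}$ agrees locally with an arithmetic progression — while at index $i$ its value is $-\frac1{i-1}-\frac1{n-i}<0$; and $\Delta^2\mathbf{c}^{(1)}=\Delta^2\mathbf{c}^{(n)}=0$, these vectors being arithmetic progressions themselves.

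For the basis claim, suppose $\sum_i\lambda_i\mathbf{c}^{(i)}=\mathbf{0}$. Applying $\Delta^2$ annihilates the two endpoint terms and, by the support property above, leaves $\lambda_j\bigl(-\frac1{j-1}-\frac1{n-j}\bigr)=0$ at each interior index $j$, forcing $\lambda_j=0$ for $1<j<n$. Then $\lambda_1\mathbf{c}^{(1)}+\lambda_n\mathbf{c}^{(n)}=\mathbf{0}$, and since these two arithmetic progressions (one from $1$ to $0$, one from $0$ to $1$) are linearly independent, $\lambda_1=\lambda_n=0$. Hence the $n$ vectors are linearly independent, so they form a basis of $\mathbb{R}^n$, and in particular the representation of any vector in their span is automatically unique; thus it only remains to verify non-negativity of the coefficients when $\mathbf{c}$ is positive and concave.

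For that, write $\mathbf{c}=\sum_i\lambda_i\mathbf{c}^{(i)}$ and apply $\Delta^2$: at each interior index $j$ one gets $(\Delta^2\mathbf{c})_j=\lambda_j\bigl(-\frac1{j-1}-\frac1{n-j}\bigr)$, where concavity forces the left-hand side to be $\le 0$ while the bracket is $<0$, so $\lambda_j\ge 0$ for $1<j<n$. For the two remaining coefficients I evaluate the representation at the first and last coordinates: since $c^{(i)}_1=0$ for all $i\ge 2$ and $c^{(1)}_1=1$, we get $\lambda_1=c_1$, and symmetrically $\lambda_n=c_n$, both of which are $\ge 0$ by positivity of $\mathbf{c}$. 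The degenerate ranges $n=1,2$ (where concavity is vacuous and the $\mathbf{c}^{(i)}$ are just the standard basis vectors) I would dispatch in a line. The only point needing genuine care, and the one I would spell out fully, is the support statement for $\Delta^2\mathbf{c}^{(i)}$: one must check that at an interior index adjacent to the peak $i$ the three consecutive entries of $\mathbf{c}^{(i)}$ still lie on one arithmetic branch, so that no spurious nonzero second difference is introduced there.
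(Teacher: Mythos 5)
Your proof is correct, but it takes a genuinely different route from the paper's. The paper argues by induction on the number of \emph{singular} indices (interior indices $i$ with $2c_i>c_{i-1}+c_{i+1}$): after subtracting $c_1\mathbf{c}^{(1)}+c_n\mathbf{c}^{(n)}$ it peels off one multiple of $\mathbf{c}^{(i)}$ at a time, choosing $\lambda_i$ so that the first singular index becomes non-singular, and invokes the induction hypothesis on the remainder. You instead diagonalize the problem outright: the observation that $\Delta^2\mathbf{c}^{(i)}$ is supported at the single interior index $i$, with the explicit value $-\frac{1}{i-1}-\frac{1}{n-i}$, lets you read off all the coefficients in closed form, $\lambda_j=\frac{(2c_j-c_{j-1}-c_{j+1})(j-1)(n-j)}{n-1}$ for $1<j<n$ and $\lambda_1=c_1$, $\lambda_n=c_n$, from which independence, uniqueness, and non-negativity all drop out at once. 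This is the same key fact the paper uses implicitly (it is exactly what justifies the claims that $\lambda_i$ exists uniquely and that singularity at $j\neq i$ is unaffected), but your version is non-inductive and buys something extra: your coefficient formulas are precisely the rows of the tridiagonal inverse matrix $M^{-1}$ that the paper states later without derivation, so your argument proves that description as a byproduct. Conversely, the paper's induction avoids writing down the closed forms $c^{(i)}_j=(j-1)/(i-1)$, $(n-j)/(n-i)$ and the adjacency check at $j=i\pm 1$ that you rightly flag as the one step needing care; both of those checks are routine and your treatment of the degenerate cases $n=1,2$ and of the endpoint coefficients is sound.
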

\begin{proof}We prove only the second statement which implies the first. For a given positive concave vector $\textbf{c}=(c_1,\ldots,c_n)$, $\textbf{c}-c_1\textbf{c}^{(1)}-c_n\textbf{c}^{(n)}$ is still positive concave vector and has $0$ as its first and last component. Thus it is enough to prove that any positive concave vector $\textbf{c}=(c_1,\ldots,c_n)$ with $c_1=c_n=0$ is a unique conic combination of $\textbf{c}_2,\ldots,\textbf{c}_{n-1}$. Calling an index $i$, $1<i<n$, \textit{singular} if $2c_i>c_{i-1}+c_{i+1}$, we use induction on the number of singular indices. This number is $0$ if and only if $\textbf{c}$ is the null vector, in which case the assertion is obvious. Otherwise let $i$ be the first singular index in $\textbf{c}$. There is a unique positive real number $\lambda_i$ such that $i$ is not a singular index in $\textbf{c}-\lambda_i \textbf{c}^{(i)}$. Every index $j\neq i$ is singular in $\textbf{c}$ if and only if it is singular in $\textbf{c}-\lambda_i \textbf{c}^{(i)}$. Applying the induction hypothesis to $\textbf{c}-\lambda_i \textbf{c}^{(i)}$ completes the proof.
\end{proof}
%
%\begin{remar}
It follows from the above that the cone of positive concave vectors is the image of the positive orthant under the (non-singular) linear transformation represented by the matrix $M$ whose rows are the minimal standard concave vectors $\textbf{c}_1,\ldots,\textbf{c}_n$, which span the extreme rays of the cone.  The matrix $M$ is centrally symmetric, i.e. $\textbf{c}_i$ is the reverse sequence of $\textbf{c}_{n-i+1}$ (or equivalently $M(i,j)=M(n-i+1,n-j+1)$). In fact $M$ can be defined as the only centrally symmetric matrix with main diagonal constant $1$, for which the entries $M(i,j)$ under the main diagonal $(i>j)$ are given by $M(i,j)=\frac {j-1}{ i-1}$. The linear transformation mapping the positive orthant to the positive concave vectors is then given by  $\lambda\mapsto\lambda M$.\\
%\newpage
%
The inverse of $M$ (also centrally symmetric) is the $n\times n$ matrix whose only non-zero entries are given by 
\renewcommand{\theenumi}{\roman{enumi}}
\begin{enumerate}
  \item the main diagonal\vspace{2mm}  $1,\ldots,\frac{2(i-1)(n-i)}{n-1},\ldots,1$,
  \item fo\vspace{2mm}r $1<j<n$ the entries  $M^{-1}(j-1,j)=M^{-1}(j+1,j)=-\frac 12 M^{-1}(j,j)$
  \end{enumerate}
E.g.  for $n=5$, we have
\begin{multicols}{2}\begin{center}
$M=\displaystyle\frac{1}{12}\cdot\begin{bmatrix}
12&9&6&3&0\\
0&12&8&4&0\\
0&6&12&6&0\\
0&4&8&12&0\\
0&3&6&9&12\\
\end{bmatrix}$
\end{center}
%
%\begin{center}
$M^{-1}=\displaystyle\frac{1}{12}\cdot\begin{bmatrix}
12&-9&0&0&0\\
0&18&-12&0&0\\
0&-9&24&-9&0\\
0&0&-12&18&0\\
0&0&0&-9&12\\
\end{bmatrix}$
%\end{center}
\end{multicols}
$M^{-1}$ transforms the cone of positive concave vectors to the positive orthant. All non-zero entries of $M^{-1}$ are on three diagonals. In comparison, the cone of increasing positive vectors is transformed (following from \cite{L1}) to the positive orthant a by the  matrix $Z^{-1}$, whose non-zero entries are on two diagonals. Both $Z^{-1}$ and $M^{-1}$ have column sums equal to $0$, except for the first column of $Z^{-1}$ and the first and last column of $M^{-1}$.\\

For $1\leq i < n$ let $C_i$ (respectively $D_i$) be the set of those positive increasing (resp. decreasing) convex vectors with maximal component value $1$ that have exactly $i$ components equal to $0$.
Then $C_i$ (resp. $D_i$) has a unique maximal vector $\textbf{a}_i$ (resp. $\textbf{b}_i$) with respect to the componentwise ordering, called the $i^{th}$ \textit{standard increasing (resp. decreasing) convex vector}. Note $\textbf{1}>\textbf{a}^{(1)}>\cdots > \textbf{a}^{(n-1)}$ and $\textbf{1}>\textbf{b}^{(1)}>\cdots > \textbf{b}^{(n-1)}$, where $\textbf{1}=(1,\ldots,1)$.
\begin{proposition}%2
\upshape \textit{The} $n-1$ \textit{standard increasing convex vectors are linearly independent and together with} $\textbf{1}$ \textit{form a basis of $\mathbb{R}^n$. Every positive increasing convex vector} $\textbf{c}$ \textit{can be written uniquely as} 
$$\textbf{c}=\lambda_1\textbf{a}^{(1)}+\cdots +\lambda_{n-1}\textbf{a}^{(n-1)}+\lambda_n \textbf{1}$$ 
\textit{with} $\lambda_1\ldots, \lambda_n\geq 0.$
\end{proposition}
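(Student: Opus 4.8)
The plan is to run the same kind of argument as for Proposition~1, but it is cleanest to pass first to finite differences.

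I would begin by removing the constant part. If $\textbf{c}=(c_1,\ldots,c_n)$ is positive increasing convex, then $\textbf{c}-c_1\textbf{1}$ is again positive increasing convex — a constant shift changes neither monotonicity nor convexity, and since $\textbf{c}$ is increasing the shifted vector stays positive — and it has first component $0$. All of $\textbf{a}^{(1)},\ldots,\textbf{a}^{(n-1)}$ vanish in the first coordinate while $\textbf{1}$ does not, so in any representation $\textbf{c}=\sum_{i<n}\lambda_i\textbf{a}^{(i)}+\lambda_n\textbf{1}$ one is forced to take $\lambda_n=c_1\ge0$. Thus the whole statement reduces to: a positive increasing convex vector $\textbf{c}$ with $c_1=0$ is a unique conic combination of $\textbf{a}^{(1)},\ldots,\textbf{a}^{(n-1)}$.

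Next I would pass to difference coordinates. Set $d_j:=c_j-c_{j-1}$ for $2\le j\le n$, together with the convention $d_1:=c_1$. On the hyperplane $\{c_1=0\}$ the map $\textbf{c}\mapsto(d_2,\ldots,d_n)$ is a linear bijection onto $\mathbb{R}^{n-1}$ (with inverse $c_j=d_2+\cdots+d_j$), and it carries the positive increasing convex vectors with $c_1=0$ exactly onto the cone $\{0\le d_2\le d_3\le\cdots\le d_n\}$, because ``increasing'' says $d_j\ge0$ and ``convex'' says $d_j\le d_{j+1}$. A short computation shows that $\textbf{a}^{(i)}$ corresponds under this map to $\frac{1}{n-i}\,\textbf{e}^{(i)}$, where $\textbf{e}^{(i)}$ is the step vector that is $0$ in positions $2,\ldots,i$ and $1$ in positions $i+1,\ldots,n$. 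These $\textbf{e}^{(1)},\ldots,\textbf{e}^{(n-1)}$ are triangular, hence a basis of $\mathbb{R}^{n-1}$, and any nonnegative nondecreasing $(d_2,\ldots,d_n)$ is the conic combination $\sum_{i=1}^{n-1}(d_{i+1}-d_i)\,\textbf{e}^{(i)}$, the coefficients $d_{i+1}-d_i$ being $\ge0$ by convexity when $i\ge2$ and equal to $d_2\ge0$ (increasing) when $i=1$, using the convention $d_1=0$. Translating back yields $\textbf{c}=\sum_{i=1}^{n-1}(n-i)(d_{i+1}-d_i)\,\textbf{a}^{(i)}$ with nonnegative coefficients, and uniqueness is immediate since the $\textbf{e}^{(i)}$ form a basis.

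Finally, for the basis claim: in any linear relation $\sum_{i<n}\mu_i\textbf{a}^{(i)}+\mu_n\textbf{1}=\textbf{0}$ the first coordinate forces $\mu_n=0$, and then the difference coordinates together with the independence of the $\textbf{e}^{(i)}$ force $\mu_1=\cdots=\mu_{n-1}=0$; hence these $n$ vectors are independent and form a basis of $\mathbb{R}^n$. The only point requiring care — and the main obstacle, such as it is — is the bookkeeping at the two ends: checking that ``positive $+$ increasing $+$ convex $+$ $c_1=0$'' is \emph{precisely} the condition $0\le d_2\le\cdots\le d_n$ (so that no constraint is silently lost), and keeping track of the $\textbf{1}$-coefficient $\lambda_n=c_1$ and of the convention $d_1=0$ that makes the $i=1$ term behave. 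Everything else is routine. Alternatively, one could simply transcribe the induction of Proposition~1, replacing ``singular index'' by ``index of strict convexity'' and $\textbf{c}^{(i)}$ by $\textbf{a}^{(i)}$; the work is then the same as there, namely checking that subtracting the right positive multiple of $\textbf{a}^{(i)}$ removes one strict-convexity index and creates no new one.
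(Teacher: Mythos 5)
Your proof is correct, but it takes a genuinely different route from the paper's. The paper argues by induction on the number of indices at which $\textbf{c}$ is strictly convex, subtracting at each step the unique positive multiple $\lambda_i\textbf{a}^{(i)}$ that removes the first such index --- a direct transcription of its proof of Proposition 1 (which you mention only as an afterthought). You instead linearize the problem by passing to the difference coordinates $d_j=c_j-c_{j-1}$, under which the cone becomes the staircase cone $0\le d_2\le\cdots\le d_n$, the generators $\textbf{a}^{(i)}$ become the scaled step vectors $\tfrac{1}{n-i}\textbf{e}^{(i)}$, and the decomposition is the telescoping identity $(d_2,\ldots,d_n)=\sum_i(d_{i+1}-d_i)\textbf{e}^{(i)}$. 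What this buys is explicitness: you obtain closed-form coefficients $\lambda_i=(n-i)(c_{i+1}-2c_i+c_{i-1})$ for $1<i<n$, $\lambda_1=(n-1)(c_2-c_1)$ and $\lambda_n=c_1$, which amounts to a derivation of the inverse matrix $N^{-1}$ described later in the paper. It also sidesteps a weak point of the inductive argument: a positive increasing convex vector with $c_1=0$ and no index of strict convexity need not be the null vector (it can be any nonnegative multiple of $\textbf{a}^{(1)}$), so the base case of the paper's induction --- and of the alternative you sketch at the end --- would have to be restated. The one step you leave implicit is the verification that the componentwise-maximal element of $C_i$ has $j$-th component $\tfrac{j-i}{n-i}$ for $j>i$; this really is a short computation (the first $j-i$ of $n-i$ ordered nonnegative differences summing to $1$ total at most $\tfrac{j-i}{n-i}$), and the paper asserts the same formula without proof when defining $N$, so it is not a gap.
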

\begin{proof}The first statement is obvious. It is enough to prove the second statement for $\textbf{c}=(c_1,\ldots,c_n)$ with $c_1=0$ (because $c_1\textbf{1}$ can be subtracted). In such a vector call an index $1<i<n$ \textit{singular}  if $2a_i>a_{i-1}+a_{i+1}$. Now we use induction on the number of singular indices. This number is $0$ only in the obvious case where $\textbf{c}$ is the null vector. Otherwise let $i$ be the first singular index in $\textbf{c}$. There is a unique positive real number $\lambda_i$ such that $i$ is not a singular index in $\textbf{c}-\lambda_i \textbf{a}^{(i)}$ and to this vector we can apply the induction hypothesis.
\end{proof}
\begin{corollary}\upshape
\textit{The} $n-1$ \textit{standard decreasing convex vectors are linearly independent and with} $\textbf{1}$ \textit{form a basis of $\mathbb{R}^n$. Every positive decreasing convex vector} $\textbf{c}$ \textit{can be written uniquely as }
$$\textbf{c}=\lambda_1\textbf{b}^{(1)}+\cdots +\lambda_{n-1}\textbf{b}^{(n-1)}+\lambda_n \textbf{1}$$ 
\textit{with} $\lambda_1\ldots, \lambda_n\geq 0.$
\end{corollary}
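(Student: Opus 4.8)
The plan is to deduce the Corollary from the second Proposition by exploiting the reversal symmetry of $\mathbb{R}^n$. Let $\rho\colon\mathbb{R}^n\to\mathbb{R}^n$ denote the linear involution $(x_1,\ldots,x_n)\mapsto(x_n,\ldots,x_1)$, represented by the reversal permutation matrix. First I would record the elementary facts that $\rho$ preserves nonnegativity of each component, preserves the value of the largest component, preserves convexity (the inequality $a_{i-1}-2a_i+a_{i+1}\ge 0$ is invariant under $i\mapsto n+1-i$), and interchanges the properties of being increasing and being decreasing. Consequently $\rho$ restricts to a bijection from the set of positive increasing convex vectors onto the set of positive decreasing convex vectors.

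Next I would verify that $\rho$ carries $C_i$ onto $D_i$: reversal fixes the value $1$ of the maximal component and the number of components equal to $0$, so it maps $C_i$ bijectively onto $D_i$. Since $\rho$ merely permutes coordinates, it is an automorphism of the componentwise order, and hence it sends the unique maximal element $\textbf{a}^{(i)}$ of $C_i$ to the unique maximal element of $D_i$, which by definition is $\textbf{b}^{(i)}$. Evidently $\rho(\textbf{1})=\textbf{1}$.

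With these identifications the Corollary follows at once. Given a positive decreasing convex vector $\textbf{c}$, the vector $\rho(\textbf{c})$ is positive increasing convex, so by the second Proposition there are unique $\lambda_1,\ldots,\lambda_n\ge 0$ with $\rho(\textbf{c})=\lambda_1\textbf{a}^{(1)}+\cdots+\lambda_{n-1}\textbf{a}^{(n-1)}+\lambda_n\textbf{1}$. Applying the linear map $\rho$ to both sides and using $\rho(\textbf{a}^{(i)})=\textbf{b}^{(i)}$ and $\rho(\textbf{1})=\textbf{1}$ gives $\textbf{c}=\lambda_1\textbf{b}^{(1)}+\cdots+\lambda_{n-1}\textbf{b}^{(n-1)}+\lambda_n\textbf{1}$; uniqueness of the coefficients transfers because $\rho$ is a bijection. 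In the same way $\{\textbf{b}^{(1)},\ldots,\textbf{b}^{(n-1)},\textbf{1}\}$ is the image under the linear automorphism $\rho$ of the basis $\{\textbf{a}^{(1)},\ldots,\textbf{a}^{(n-1)},\textbf{1}\}$, and is therefore itself a basis of $\mathbb{R}^n$.

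I do not expect a genuine obstacle here. The one point that warrants a moment's care is the claim $\rho(\textbf{a}^{(i)})=\textbf{b}^{(i)}$: it rests on $\rho$ being simultaneously a bijection $C_i\to D_i$ and an automorphism of the componentwise order, so that it must carry the unique maximum of $C_i$ to the unique maximum of $D_i$. Should one prefer to avoid the symmetry argument, the inductive proof of the second Proposition admits an evident mirror-image version (normalizing by subtracting $c_n\textbf{1}$ and working with the vectors $\textbf{b}^{(i)}$), but the reversal route is shorter and makes the parallel with the Proposition transparent.
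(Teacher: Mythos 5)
Your proposal is correct and is exactly the argument the paper intends: the Corollary is stated without proof precisely because it follows from the second Proposition by the coordinate-reversal symmetry $\rho$, which you have simply written out in full (including the one point worth checking, that $\rho$ carries the unique maximum of $C_i$ to that of $D_i$).
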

\begin{proposition}In $\mathbb{R}^n$ the cone of positive convex vectors has $2n-2$ extreme rays spanned  by the standard increasing and standard decreasing convex vectors.  
\end{proposition}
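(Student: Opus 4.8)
The plan is to establish three facts: \emph{(1)} the $2n-2$ standard increasing and standard decreasing convex vectors generate the cone $K$ of positive convex vectors; \emph{(2)} $K$ is pointed; and \emph{(3)} none of these $2n-2$ vectors lies in the conic hull of the others, and no two of them are parallel. Granting all three, the conclusion is immediate: for a pointed, finitely generated (equivalently, polyhedral) cone every extreme ray is spanned by one of the generators, so by \emph{(1)} and \emph{(2)} the extreme rays of $K$ lie among the rays through the $\textbf{a}^{(i)}$ and $\textbf{b}^{(i)}$; the non-parallelism in \emph{(3)} makes these $2n-2$ distinct rays; and the irredundancy in \emph{(3)} makes each of them extreme. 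Hence $K$ has exactly $2n-2$ extreme rays, namely the asserted ones. Fact \emph{(2)} needs only that $K$ is contained in the positive orthant $\mathbb{R}^n_{\ge0}$, which contains no line.

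For generation I will first record that $\textbf{b}^{(i)}$ is the reversal of $\textbf{a}^{(i)}$ and that $\textbf{a}^{(1)}+\textbf{b}^{(1)}=\textbf{1}$, since $\textbf{a}^{(1)}$ is the arithmetic progression with $a^{(1)}_j=\tfrac{j-1}{n-1}$ and $a^{(1)}_j+a^{(1)}_{n+1-j}=\tfrac{j-1}{n-1}+\tfrac{n-j}{n-1}=1$. Given a positive convex $\textbf{c}$, put $\mu=\min_i c_i\ge0$ and pass to $\textbf{c}'=\textbf{c}-\mu\textbf{1}$, a positive convex vector with minimum entry $0$; any multiple of $\textbf{1}$ we produce may be rewritten as a conic combination via $\textbf{1}=\textbf{a}^{(1)}+\textbf{b}^{(1)}$, so it suffices to treat $\textbf{c}'$. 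Since the successive differences $c'_{i+1}-c'_i$ are non-decreasing (this is convexity), the location of the minimum $0$ is constrained: if $0$ occurs at the first coordinate then $\textbf{c}'$ is increasing convex and Proposition 2 writes it conically in the $\textbf{a}^{(i)}$; if at the last coordinate, $\textbf{c}'$ is decreasing convex and the Corollary does the same with the $\textbf{b}^{(i)}$; otherwise pick an interior $m$ with $c'_m=0$, note $\textbf{c}'$ is non-increasing on $\{1,\dots,m\}$ and non-decreasing on $\{m,\dots,n\}$, and split
$$\textbf{c}'=\underbrace{(c'_1,\dots,c'_{m-1},0,\dots,0)}_{\textbf{u}}+\underbrace{(0,\dots,0,c'_{m+1},\dots,c'_n)}_{\textbf{v}},$$
where $\textbf{u}$ is positive decreasing convex and $\textbf{v}$ positive increasing convex, so again the Corollary and Proposition 2 apply. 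This proves $K=\mathrm{cone}\big(\textbf{a}^{(1)},\dots,\textbf{a}^{(n-1)},\textbf{b}^{(1)},\dots,\textbf{b}^{(n-1)}\big)$.

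For irredundancy, suppose some $\textbf{a}^{(i)}$ equalled a conic combination $\sum_{j\ne i}\mu_j\textbf{a}^{(j)}+\sum_k\nu_k\textbf{b}^{(k)}$ of the other $2n-3$ generators, all coefficients $\ge0$. Every $\textbf{a}^{(j)}$ has first coordinate $0$, whereas every $\textbf{b}^{(k)}$ has first coordinate $1$ (it is decreasing with its at most $n-1$ zeros bunched at the end), so comparing first coordinates forces $\sum_k\nu_k=0$, hence all $\nu_k=0$; then $\textbf{a}^{(i)}=\sum_{j\ne i}\mu_j\textbf{a}^{(j)}$ contradicts the linear independence of $\textbf{a}^{(1)},\dots,\textbf{a}^{(n-1)}$ from Proposition 2. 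The mirror argument with the last coordinate handles each $\textbf{b}^{(k)}$; and the same first/last-coordinate comparison shows no $\textbf{a}^{(i)}$ is a scalar multiple of any $\textbf{b}^{(k)}$, while linear independence within each family rules out parallel pairs there, so the $2n-2$ rays are distinct.

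The one genuinely delicate point is the claim inside the generation step that the split $\textbf{c}'=\textbf{u}+\textbf{v}$ at an interior zero produces convex vectors: one must check that the second differences of $\textbf{u}$ at the indices adjacent to $m$ stay non-negative, which uses both $c'_{m-1}\ge0$ and the convexity of $\textbf{c}'$ at $m-1$ (and symmetrically for $\textbf{v}$). The remainder — the normalization by $\mu\textbf{1}$, clearing leftover all-ones terms through $\textbf{1}=\textbf{a}^{(1)}+\textbf{b}^{(1)}$, and the coordinate comparisons in the irredundancy step — is routine.
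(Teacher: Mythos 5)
Your proof is correct and follows essentially the same route as the paper: decompose a positive convex vector into an increasing and a decreasing positive convex part, use the identity expressing $\textbf{1}$ as a sum of standard vectors, invoke Proposition 2 and its Corollary for generation, and argue extremality by showing no generator is a conic combination of the others. You merely supply the details the paper leaves implicit (the split at a zero of $\textbf{c}-(\min\textbf{c})\textbf{1}$, the first/last-coordinate argument for irredundancy), and your identity $\textbf{1}=\textbf{a}^{(1)}+\textbf{b}^{(1)}$ corrects what appears to be a typo ($\textbf{b}^{(n)}$) in the paper's version.
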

\begin{proof}Every positive convex vector can be written (not uniquely) as the sum of an increasing and a decreasing positive convex vector. The vector $\textbf{1}$ equals $\textbf{a}^{(1)}+\textbf{b}^{(n)}$. Further, since none of the $\textbf{a}^{(i)}$ can be a conic combination of the other vectors $\textbf{a}^{(j)}$ and the various $\textbf{b}^{(k)}$, and similarly none of the $\textbf{b}^{(i)}$ is a combination of the other vectors, all the rays generated by the standard convex vectors are extremal.
\end{proof}
%
%\begin{remar}
Obviously the representation of a positive convex vector as a conic combination of standard convex vectors  is not unique. However, every positive convex vector $\textbf{c}$ in $\mathbb{R}^n$ can be written with coefficients $\lambda_1\ldots, \lambda_{n-1}$ and $\theta_1\ldots, \theta_{n-1}$, uniquely determined by $\textbf{c}$, in the form
$$\textbf{c}=(\min \textbf{c})\textbf{1}+\sum \lambda_i \textbf{a}^{(i)}+\sum \theta_i \textbf{b}^{(i)}$$ where the vectors $\textbf{a}^{(i)}$ and $\textbf{b}^{(i)}$ are the standard convex vectors. Not all possible combinations of coefficients $\lambda_i$,  $\theta_i$ can appear in such a representation.

The cone of positive increasing convex vectors is the image of the positive orthant  under the (non-singular) linear transformation represented by the matrix $N$ whose rows are the vector $\textbf{1}$ and the standard increasing convex vectors $\textbf{a}^{(1)},\ldots,\textbf{a}^{(n-1)}$.  The matrix $N$ is an upper triangular matrix, whose first row is the vector $\textbf{1}$ and for all $1<i\leq j \leq n$,  $N(i,j)=\frac {j-i+1}{n-i+1}$.

The inverse matrix $N^{-1}$   is the  $n\times n$ upper triangular matrix for which $N^{-1}(i,i)=(N(i,i))^{-1}$ ($1\leq i \leq n$), $N^{-1}(i,j)\neq 0$ for $0\leq j-i \leq 2$ only,  and all column sums and row sums are equal to $0$, except for the first column and the last row.
E.g.  for $n=5$, we have
\begin{multicols}{2}\begin{center}
$N=\displaystyle\frac{1}{12}\cdot\begin{bmatrix}
12&12&12&12&12\\
0&3&6&9&12\\
0&0&4&8&12\\
0&0&0&6&12\\
0&0&0&0&12\\
\end{bmatrix}$
\end{center}
%
%\begin{center}
$N^{-1}=\displaystyle\begin{bmatrix}
1&-4&3&0&0\\
0&4&-6&2&0\\
0&0&3&-4&1\\
0&0&0&2&-2\\
0&0&0&0&1\\
\end{bmatrix}$
%\end{center}
\end{multicols}

The cone of positive decreasing convex vectors, its $n$ extreme rays, and the linear transformation matrix between that cone and the positive orthant has an entirely analogous description. The same can be done for the cones of positive increasing concave and positive decreasing  concave vectors. These cones are all simplicial cones, images of the positive orthant under a linear  transformation, whose inverse is represented by a matrix of special form, as the matrices  $M^{-1}$ and $N^{-1}$ above. This matrix is almost diagonal, in the sense that all non-zero entries are on two or three diagonals. Moreover the row sums and also the column sums of these inverse matrices are constant with the exception of a single special row and a single column. 


\begin{thebibliography}{9}




\bibitem{B1} \textsc{F. Brenti}, \textit{Log-concave and unimodal sequences in algebra, combinatorics, and geometry: an update}, Contemporary Math., 178 (1994), pp. \vspace{0.7mm} 71-89.


\bibitem {D1} \textsc{W. M. B. Dukes}, \textit{Concerning the shape of a geometric lattice}, Discrete Mathematics, 308, Nr. 24, (2008), pp.\vspace{0.7mm} 6632 - 6638.


\bibitem {K1} \textsc{D. C. Kurtz}, \textit{A Note on Concavity Properties of Triangular Arrays of Numbers}, J. Comb. Theory, Ser. A, Vol. 13, Nr. 1 (1972), pp.\vspace{0.7mm} 135-139.
                                                                                      %
\bibitem {L1} \textsc{L. Lov\'asz}, \textit{Submodular functions and convexity}, in Mathematical Programming The State of the Art, eds. A. Bachem and B. Korte and M. Gr\"otschel, Springer, Berlin Heidelberg,(1983) pp. \vspace{0.7mm}235-257.


\bibitem {M1} \textsc{L. Major}, \textit{Unimodality and log-concavity of $f$-vectors for cyclic and ordinary polytopes}, Discrete Applied Mathematics,  161, 10-11,\vspace{0.7mm} (2013), pp. 1669-1672.


\bibitem{MM1} \textsc{J. Marichal, P. Mathonet}, \textit{Approximations of Lov\'asz extensions and their induced interaction index}, Discrete Applied Mathematics,  156,  1,  (2008), \vspace{0.7mm}pp. 11-24.


\bibitem{R1} \textsc{G. C. Rota}, \textit{Combinatorial theory, old and new}, {{Actes du Congr\`es International des Math\'ematiciens}} (1970) Tome 3,\vspace{0.7mm} pp. 229-233. 


\bibitem {SZ1} \textsc{M. W. Schmitt, G. M. Ziegler},
\textit{Ten Problems in Geometry}, in {Shaping Space}, ed. {M. Senechal}, {Springer, New York}, {(2013)}, pp. \vspace{0.7mm}{279-289.}




\bibitem {S1} \textsc{R. P. Stanley},\textit{ Log-concave and unimodal sequences in Algebra, Combinatorics and Geometry}, Ann. New York Acad. Sci. 576 (1989),pp. \vspace{0.7mm} 500-534.






\end{thebibliography}
\end{document}